\def\cX{{\cal X}}
\def\cZ{{\cal Z}}
\newcommand{\bfrac}[2]{\left(\frac{#1}{#2}\right)}
\def\cE{{\cal E}}
\newcommand{\beq}[1]{\begin{equation}\label{#1}}
\newcommand{\eeq}{\end{equation}}
\newcommand{\set}[1]{\left\{#1\right\}}
\DeclareMathOperator{\sign}{sgn}
\def\Pr{\mbox{{\bf Pr}}}
\def\whp{{\bf whp}}
\newcommand{\ignore}[1]{}
\def\e{\varepsilon} \def\f{\phi}   
\def\G{\Gamma}  \def\k{\kappa}
\def\z{\zeta} \def\th{\theta}    
  \def\n{\nu} \def\p{\pi}\def\x{\xi}
\def\r{\rho}  \def\s{\sigma} 
\def\t{\tau} \def\om{\omega}
\def\cY{{\cal Y}}
\def\E{{\bf E}}
\def\Pr{\mbox{{\bf Pr}}}
\def\whp{{\bf whp}}
\def\bX{\bar{X}}
\def\bX{Y}
\newcommand{\scr}[2]{_{#1}^{(#2)}}
\def\kappaNew{r}
\def\k{\kappaNew}
\def\vH{\vec{H}}
\def\ve{\vec{e}}
\title{On rainbow Hamilton cycles in random hypergraphs}
\author{Andrzej Dudek\thanks{Supported in part by Simons Foundation grant 522400.} \qquad Sean English\\
\small Department of Mathematics\\[-0.8ex]
\small Western Michigan University\\[-0.8ex] 
\small Kalamazoo, U.S.A.\\
\small\tt \{andrzej.dudek, sean.j.english\}@wmich.edu\\
\and
Alan Frieze\thanks{Supported in part by NSF grant DMS1661063.}\\
\small Department of Mathematical Sciences\\[-0.8ex]
\small Carnegie Mellon University\\[-0.8ex] 
\small Pittsburgh, U.S.A.\\
\small\tt alan@random.math.cmu.edu}
\begin{document}

\maketitle

\begin{abstract}
 Let $H_{n,p,\kappaNew}^{(k)}$ denote a \emph{randomly colored random hypergraph}, constructed on the vertex set $[n]$ by taking each $k$-tuple independently with probability $p$, and then independently coloring it with a random color from the set $[\kappaNew]$. Let $H$ be a $k$-uniform hypergraph of order $n$. An \emph{$\ell$-Hamilton cycle} is a spanning subhypergraph $C$ of  $H$  with $n/(k-\ell)$ edges and such that for some cyclic ordering of the vertices each edge of $C$ consists of $k$ consecutive vertices and every pair of adjacent edges in $C$ intersects in precisely $\ell$ vertices. 

In this note we study the existence of \emph{rainbow} $\ell$-Hamilton cycles (that is every edge receives a different color) in $H_{n,p,\kappaNew}^{(k)}$. We mainly focus on the most restrictive case when $\kappaNew = n/(k-\ell)$. In particular, we show that for the so called tight Hamilton cycles ($\ell=k-1$) $p = e^2/n$ is the sharp threshold for the existence of a rainbow tight Hamilton cycle in $H_{n,p,n}^{(k)}$ for each $k\ge 4$.
\end{abstract}

\maketitle

\section{Introduction}

Suppose that $k>\ell\ge 1$. An {\em $\ell$-Hamilton cycle} $C$
in a $k$-uniform hypergraph $H=(V,\cE)$ on $n$ vertices is a
collection of $m_\ell=n/(k-\ell)$ edges of $H$ such that for some cyclic order
of $[n]$ every edge consists of $k$ consecutive vertices and for
every pair of consecutive edges $E_{i-1},E_i$ in $C$ (in the natural
ordering of the edges) we have $|E_{i-1}\cap E_i|=\ell$ (see Figure~\ref{fig:cycles}). Thus, in every
$\ell$-Hamilton cycle the sets $C_i=E_i\setminus
E_{i-1},\,i=1,2,\ldots,m_\ell$, are a partition of $V$ into sets of
size $k-\ell$. Hence, $m_{\ell}=n/(k-\ell)$. We thus always assume, when discussing $\ell$-Hamilton cycles, that this necessary condition, $k-\ell$ divides $n$,  is fulfilled. In the
literature, when $\ell=k-1$ we have a {\em tight} Hamilton cycle and
when $\ell=1$ we have a {\em loose} Hamilton cycle.

\begin{figure}[h]
\centering
\includegraphics[scale=0.8]{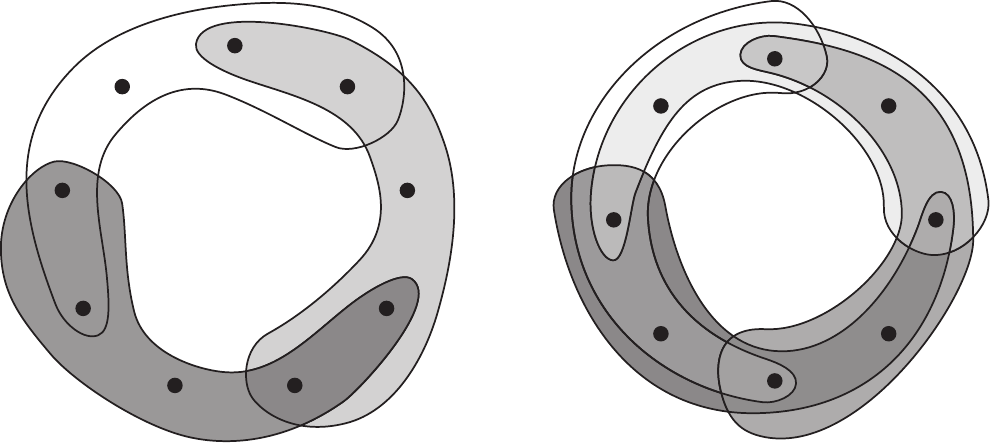}
\caption{A 2-Hamilton and a 3-Hamilton 5-uniform cycles.}
\label{fig:cycles}
\end{figure}

Let $H_{n,p}^{(k)}$ denote a \emph{random hypergraph}, constructed on the vertex set $[n]$ by taking each $k$-tuple from $\binom{[n]}{k}$ independently with probability $p$. When $k=2$ we have the well-known Erd\H{o}s-R\'enyi-Gilbert model $G_{n,p}$.

The threshold for the existence of Hamilton cycles in the random graph $G_{n,p}$ has been known
for many years, see, e.g., \cite{AKS}, \cite{Boll} and \cite{KS}.
Recently these results were extended to hypergraphs, see, e.g., \cite{ABKP, DF, DF_loose, DFLS, Fer, F, FK, G, NS, PP}. Below we summarize some of them.

In the following and throughout the paper, $\omega = \omega(n)$ can be any function tending to infinity with~$n$. All logarithms in this paper are natural (base $e$). Recall that an event $\cE_n$ occurs {\em with high probability}, or \whp\ for brevity, if $\lim_{n\rightarrow\infty}\Pr(\cE_n)=1$. 

\begin{theorem}[\cite{DF}]\label{thm:tight}
Let $\e>0$ be fixed. Then:

\begin{enumerate}[(i)]
\item\label{thm:tight:1} For all integers $k> \ell\geq 2$, if $p\leq  (1-\e)e^{k-\ell}/n^{k-\ell}$, then \whp\ $H_{n,p}^{(k)}$ is not $\ell$-Hamiltonian.
\item\label{thm:tight:2} For all integers $k>\ell \ge 3$, there exists a
constant $K=K(k)$ such that if $p\geq K/n^{k-\ell}$
and $n$ is a multiple of $k-\ell$ then
 $H_{n,p}^{(k)}$ is $\ell$-Hamiltonian \whp. \label{thm:a}
\item If $k>\ell=2$ and $p\geq \omega/n^{k-2}$
and $n$ is a multiple of $k-2$,
then $H_{n,p}^{(k)}$ is $2$-Hamiltonian \whp.\label{thm:b}
\item\label{thm:tight:4} For all $k\geq 4$, if $p\geq  (1+\e)e/n$, then
\whp\ $H_{n,p}^{(k)}$ is $(k-1)$-Hamiltonian, i.e. it contains a \emph{tight} Hamilton cycle.
\end{enumerate}
\end{theorem}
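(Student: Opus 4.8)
The plan is to apply the second moment method to the random variable $X$ that counts tight Hamilton cycles in $H_{n,p}^{(k)}$ with $p=(1+\e)e/n$. A tight Hamilton cycle is specified by a cyclic ordering of $[n]$, read up to rotation and reflection, and it uses exactly the $n$ windows of $k$ consecutive vertices as its edges. Thus there are $\frac{n!}{2n}$ potential tight Hamilton cycles, each present with probability $p^n$, so
\[
\E[X]=\frac{n!}{2n}\,p^n\sim\frac{\sqrt{2\pi n}}{2n}\brac{\frac{np}{e}}^n=\frac{\sqrt{2\pi n}}{2n}\,(1+\e)^n\rai .
\]
The divergence of $\E[X]$ occurs precisely when $np/e>1$, which explains the threshold value $e/n$; note that part~(i) of Theorem~\ref{thm:tight} supplies the matching statement that below $e/n$ the hypergraph is \whp\ not $(k-1)$-Hamiltonian. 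Since a divergent first moment does not by itself force existence, the real content is to show $\E[X^2]=(1+o(1))\E[X]^2$, after which Chebyshev's inequality gives $X>0$ \whp.

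For the second moment I would write
\[
\E[X^2]=\sum_{j\ge 0}N_j\,p^{2n-j},
\]
where $N_j$ is the number of ordered pairs $(C_1,C_2)$ of tight Hamilton cycles with exactly $j$ common edges (such a pair needs only $2n-j$ distinct edges to be present). Almost all ordered pairs are edge-disjoint, so $N_0\,p^{2n}=(1+o(1))\E[X]^2$, and it remains to prove $\sum_{j\ge1}N_j\,p^{-j}=o\brac{(n!/2n)^2}$. I would bound $N_j$ by fixing $C_1$ and counting the cycles $C_2$ that meet it in $j$ edges; the shared edges break into maximal runs of consecutive common edges (shared tight sub-paths), and each such run forces a block of vertices to appear in the same cyclic order, up to reversal, in both orderings, drastically cutting the number of admissible $C_2$.

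The crucial and most delicate term is $j=1$, a single shared edge, and this is exactly where $k\ge4$ enters. A fixed edge $E$ lies in a uniformly random tight Hamilton cycle with probability at most $n\cdot k!/n^{k}=k!/n^{k-1}$ (there are $n$ windows, each equal to the set $E$ with probability $\sim k!/n^{k}$), so
\[
N_1\le\frac{n!}{2n}\cdot n\cdot\frac{n!}{2n}\cdot\frac{k!}{n^{k-1}}=\brac{\frac{n!}{2n}}^{2}\frac{k!}{n^{k-2}},
\qquad\text{whence}\qquad
\frac{N_1\,p^{-1}}{(n!/2n)^2}\le\frac{k!}{(1+\e)e}\cdot\frac{1}{n^{k-3}} .
\]
For $k\ge4$ this tends to $0$, but for $k=3$ it tends to the positive constant $6/((1+\e)e)$; this is precisely why the plain second moment succeeds for $k\ge4$ and fails (the ratio $\E[X^2]/\E[X]^2$ staying bounded away from $1$) for $k=3$.

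Finally I would show the remaining terms are even smaller. Each extra isolated shared edge contributes a further factor of order $k!/n^{k-2}$ to the count and $p^{-1}=\Theta(n)$ to the sum, so terms with $j$ isolated shared edges are of order $n^{-j(k-3)}$ relative to $\E[X]^2$, while runs of length $\ge2$ are more rigid still and contribute less. Summing the resulting geometric-type series over all overlap patterns (the number of runs, their lengths, and their placements) would bound $\sum_{j\ge1}N_j\,p^{2n-j}$ by $O\brac{n^{-(k-3)}}\E[X]^2=o(\E[X]^2)$ for $k\ge4$, completing the estimate. The main obstacle is the bookkeeping in this last step: organizing the overlap patterns so that every shared edge is genuinely penalized by the entropy loss in the number of completions, and checking uniformly that no pattern (in particular those mixing short and long runs) escapes the $k\ge4$ gain.
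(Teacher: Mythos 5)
Your skeleton --- a second moment computation over Hamilton-cycle orderings, with pairs of cycles classified by the number of shared edges grouped into maximal runs --- is exactly the approach behind this theorem, which the paper cites from \cite{DF} and whose proof it adapts in Section 2 via the quantities $N(b,a)$ (permutations sharing $b$ edges forming $a$ edge-disjoint paths). Your first-moment computation and your analysis of the $j=1$ term, including the explanation of why $k\ge 4$ is needed and why $k=3$ escapes, are correct and match the known picture.

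The gap is in your final step, and it is not mere bookkeeping. You claim that each additional shared edge costs a factor of order $n^{-(k-3)}$ and that runs of length $\ge 2$ are ``more rigid still and contribute less.'' The opposite is true. Inside a run, each additional shared edge extends the run by only one vertex of $C_2$, so it cuts the number of admissible $C_2$ by a factor of roughly $1/n$ while gaining $p^{-1}$: a net per-edge factor of $\Theta(1/(np))=\Theta(1)$, not $n^{-(k-3)}$. The factor $n^{-(k-3)}$ is paid once per \emph{run}, so isolated edges are cheap and long runs are the critical configurations. Consequently the overlap sum contains a genuine geometric series $\sum_{b}\brac{e/(np)}^{b}$ --- this is exactly equation (13) of \cite{DF}, quoted in Section 2 of the paper --- and its convergence is precisely where the hypothesis $p\ge (1+\e)e/n$, and hence the constant $e$ in the threshold, enters the second moment. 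Your bookkeeping never uses $np>e$ beyond the $j=1$ term (where it is not needed), so as written it would ``prove'' part (iv) for every $p=c/n$ with constant $c>0$, contradicting part (i) of the very theorem. The diagonal makes the failure concrete: the pair $C_2=C_1$ (so $j=n$) contributes $N_n p^{n}=\E[X]\cdot p^{... }$, i.e.\ a ratio of $1/\E[X]\approx \frac{2n}{\sqrt{2\pi n}}\brac{e/(np)}^{n}$ to $\E[X]^2$, which tends to $0$ only because $np\ge(1+\e)e$, and is nowhere near your claimed $n^{-n(k-3)}$ scale. Any correct write-up must isolate this near-total-overlap regime and control it by the $(e/np)^b$ series rather than by per-edge polynomial savings.
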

\noindent
In particular, this theorem shows that $e/n$ is the sharp threshold for the existence of a tight
Hamilton cycle in a $k$-uniform hypergraph, when $k\ge 4$. As it was explained in~\cite{DF}, quite 
surprisingly, the proof of \eqref{thm:tight:2}-\eqref{thm:tight:4} in Theorem~\ref{thm:tight} is based on the second moment method.
\begin{theorem}[\cite{DF_loose, DFLS, Fer, F}]\label{thm:loose}
Fix $k\ge 3$ and suppose that $n$ is a multiple of $k-1$. Let $p\ge \omega (\log n)/n^{k-1}$. Then, \whp\  $H_{n,p}^{(k)}$  contains a loose Hamilton cycle.
\end{theorem}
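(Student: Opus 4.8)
The natural first attempt, in the spirit of the second moment proof of Theorem~\ref{thm:tight}, is to apply the second moment method to the number $X$ of loose Hamilton cycles in $H_{n,p}^{(k)}$, with $m=n/(k-1)$. A standard count of labelled loose Hamilton cycles (fix the $m$ link vertices, a cyclic order on them, and an assignment of the remaining $m(k-2)$ vertices into the $m$ slots of size $k-2$) gives, writing $C_n$ for this count, $\E[X]=C_n p^m$ with $C_n=\frac{n!}{2m\,((k-2)!)^m}$. Since $p\ge \omega\log n/n^{k-1}\gg n^{-(k-1)}$, a Stirling estimate shows the dominant term of $\log\E[X]$ is $\tfrac{n}{k-1}\log(\omega\log n)\to\infty$, so $\E[X]\to\infty$. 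One would then hope that $\E[X^2]=(1+o(1))(\E[X])^2$.

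The main obstacle is that this hope fails. Grouping ordered pairs of loose Hamilton cycles by the number $s$ of edges they share, and writing $M_s$ for the number meeting a fixed one in exactly $s$ edges, symmetry gives $\frac{\E[X^2]}{(\E[X])^2}=\frac{1}{C_n}\sum_{s\ge0}M_s\,p^{-s}$, with the $s=0$ term contributing the leading $1$. But the $s=1$ term already diverges: double counting shows a fixed $k$-set lies in a $\frac{m}{\binom nk}$ fraction of all loose Hamilton cycles, so $\frac{M_1}{C_n}\asymp m\cdot\frac{m}{\binom nk}\asymp n^{-(k-2)}$ and hence $\frac{M_1}{C_n}p^{-1}\asymp \frac{n^{k-1}}{\omega\log n}\cdot n^{-(k-2)}=\frac{n}{\omega\log n}\to\infty$. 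Intuitively, forcing a single coincidence ``saves'' a factor $p^{-1}\sim n^{k-1}/\log n$ that swamps the $O(n^{-(k-2)})$ cost of the overlap, so the bare second moment gives no lower bound at all on $\Pr(X>0)$. This blow-up is not an artefact: at $p$ of order $\log n/n^{k-1}$ the binding obstruction is of coupon-collector type (\emph{every} vertex must lie in a chosen edge), so $X$ is genuinely not concentrated and any argument must be robust to the last few vertices to be covered.

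Accordingly, I would pivot to treating existence as a \emph{covering} (perfect-matching) problem rather than a rigidity problem. The point is that once one thinks of completing a loose cycle as partitioning the internal vertices into admissible slots, the feasibility threshold is exactly that of a perfect matching / exact cover in a $k$-uniform random hypergraph, which by Johansson--Kahn--Vu is $\Theta(\log n/n^{k-1})$; the necessity of $(k-1)\mid n$ is precisely the corresponding feasibility (parity) condition, which is the focus of~\cite{DFLS}. Thus the right engine is not a bare second moment on $X$ but the matching machinery of Kahn together with a second moment \emph{conditioned on the degree sequence} (or on the number of edges), chosen so as to cancel the divergent $s\ge1$ contributions identified above. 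The slack $\omega\to\infty$ is what lets one run this without pinning down the sharp constant.

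The part I expect to be genuinely hard is producing a \emph{single} spanning loose cycle rather than a near-cover by several short loose cycles, and doing so globally. The reason local surgery is unavailable is sparsity: for a prescribed pair $\{u,v\}$ the codegree is $p\binom{n-2}{k-2}\asymp \omega\log n/n\to0$, so \whp\ there is no edge through two fixed vertices, and one cannot simply splice two loose paths or merge two short loose cycles by inserting a connecting edge. Hence both the covering and the global cyclic connectivity must be extracted \emph{simultaneously} from the same conditioned argument, rather than bolted on afterwards by patching. Marrying the coupon-collector covering threshold to this unavoidably global connectivity requirement at $p$ of order $\log n/n^{k-1}$ is, I expect, the crux, and is exactly the content distributed across~\cite{DF_loose, DFLS, Fer, F}.
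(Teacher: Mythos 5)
Your diagnosis is sound, but the proposal stops exactly where the proof has to start, so there is a genuine gap. You correctly verify two things: the bare second moment fails for loose cycles (your computation that the $s=1$ overlap term contributes $\Theta\bigl(n/(\omega\log n)\bigr)\to\infty$ is right, and this is precisely why the method behind Theorem~\ref{thm:tight} does not extend to $\ell=1$), and the correct engine is indeed the perfect-matching threshold of Johansson--Kahn--Vu. But from there your positive program is only gestured at: ``matching machinery of Kahn together with a second moment conditioned on the degree sequence'' is never instantiated --- no auxiliary hypergraph is defined, no lemma is stated, no reduction is carried out --- and your final paragraph concedes that the crux ``is exactly the content distributed across'' the very references you were asked to reprove. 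What is missing is the concrete reduction that \cite{F, DF_loose, DFLS, Fer} (and Appendix~A of this paper, which adapts it to the rainbow setting) actually perform: fix the partition of $[n]$ into a set $X$ of $m=n/(k-1)$ link vertices and a set $Y$ of $(k-2)m$ internal vertices, so that every edge of the sought cycle has exactly two vertices in $X$ and $k-2$ in $Y$; blow each vertex up into copies ($d$ copies of each $x\in X$, $d/2$ copies of each $y\in Y$) and take uniform random partitions of the copy sets into groups $X_1,\ldots,X_d$ and $Y_1,\ldots,Y_d$ with a projection map $\psi$ back to $X\cup Y$; a loose Hamilton cycle respecting the partition is then assembled from perfect matchings in the auxiliary hypergraphs living on the groups, and each such matching exists \whp\ by the Johansson--Kahn--Vu theorem once $p\ge \omega(\log n)/n^{k-1}$. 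The construction needs $m$ even, and that parity restriction is then removed by Ferber's coupling of the undirected model with a directed (here, colored directed) hypergraph via McDiarmid's argument, as in Appendix~B.

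Separately, one of your structural claims points in the wrong direction. You argue that since codegrees are $o(1)$ one cannot ``bolt on'' connectivity, and hence covering and global cyclic structure must be extracted \emph{simultaneously} from one conditioned argument. The actual proofs do the opposite: they decouple the two entirely. The cyclic structure costs no probability at all --- it is imposed combinatorially by the vertex-splitting/identification device and the prescribed role of $X$ as the set of intersection vertices --- while all of the randomness is spent on the covering problem, which is literally a perfect matching question answered by Johansson--Kahn--Vu. So the missing idea is not a cleverer conditioned second moment that ``cancels the divergent terms''; it is this splitting reduction, and without it your writeup establishes only that the naive approach fails, not that the theorem holds.
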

\noindent
Thus, $(\log n)/ n^{k-1}$ is the asymptotic threshold for the
existence of loose Hamilton cycles. This is because
if $p\leq (1-\e) (k-1)! (\log n) / n^{k-1}$ and $\e>0$ is
constant, then \whp\ $H_{n,p}^{(k)}$ contains isolated vertices. 

In this note we study the existence of rainbow Hamilton cycles in $H_{n,p}^{(k)}$ with independently colored edges. Let $H_{n,p,\kappaNew}^{(k)}$ denote a \emph{randomly colored random hypergraph}, constructed on the vertex set $[n]$ by taking each $k$-tuple independently with probability $p$, and then independently coloring it with a random color from the set $[\kappaNew]$. We also denote $H_{n,p,\kappaNew}^{(2)}$ by $G_{n,p,\kappaNew}$. Rainbow properties of $G_{n,p,\kappaNew}$ attracted a considerable amount of attention, see, e.g., \cite{BF, CF, FerK, FL, FNP}. 

Here we only focus on rainbow Hamilton cycles, which are Hamilton cycles where every edge of the cycle receives a different color. Improving the previous results of Cooper and Frieze~\cite{CF} and Frieze and Loh~\cite{FL}, Ferber and Krivelevich~\cite{FerK} determined the very sharp threshold for the existence of the rainbow Hamilton cycle in $G_{n,p,\kappaNew}$ assuming nearly optimal number of colors.
\begin{theorem}[\cite{FerK}]
Let $\varepsilon>0$, $\kappaNew = (1 + \varepsilon)n$ and let $p = (\log n+\log \log n+\omega)/n$. Then, \whp\ $G_{n,p,\kappaNew}$ contains a rainbow Hamilton cycle. 
\end{theorem}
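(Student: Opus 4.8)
The plan is to condition on the underlying uncolored graph $G=G_{n,p}$ and to treat the colors as revealed on demand, exploiting that distinct edges receive independent colors. At $p=(\log n+\log\log n+\omega)/n$ one cannot hope to take an arbitrary Hamilton cycle of $G$ and repair color repeats: a uniformly random Hamilton cycle has $\Theta(n)$ pairs of equally-colored edges, far too many to fix locally, and the obvious first/second moment approach founders on the correlations between the events ``$C$ is rainbow''. Hence the cycle must be grown with colors in mind from the start. I would first record the standard structural facts that hold \whp\ at this density: $G$ is a Pósa expander (for some constant $\beta>0$, after deleting a few vertices every $S$ with $\card{S}\le n/2$ has $\card{N(S)}\ge(1+\beta)\card{S}$), it has $\Theta(n^2)$ boosters whenever it is not yet Hamiltonian, and the vertices of degree $O(1)$ are few, pairwise far apart, and each lies on many internally disjoint short paths. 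I would also split $p=p_1+p_2$ so that $p_1$ already yields these properties while $p_2=\Theta(\omega/n)$ is a sprinkle whose edges carry fresh, independent colors to be used as closing and booster edges.

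The core is a colored rotation--extension argument in which the color of an edge is exposed only when the algorithm first inspects it. I maintain a \emph{rainbow} path $P$ (pairwise distinct edge colors) and perform Pósa rotations, accepting a rotation or extension edge only if its color, revealed at that moment, avoids the at most $\card{E(P)}\le n$ colors already seen. This is the decisive point: since $r=(1+\e)n$ and at most $n$ colors are ever in use, each freshly revealed edge is uniform over $[r]$ independently of the seen colors, and so is usable with probability at least $\e/(1+\e)$, a constant. The available-color neighborhood of the current endpoint therefore behaves like an independent thinning of its true neighborhood by a constant factor, which preserves expansion for vertices of degree $\gg 1$; running Pósa's rotation lemma on this thinned structure produces $\Theta(n^2)$ endpoint pairs of rainbow paths, and a single sprinkled edge of fresh color between such a pair either extends the rainbow path or closes it into a rainbow Hamilton cycle. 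Because each edge's color is queried at most once and at most $n$ queries ever succeed, the deferred-revelation bookkeeping stays consistent and sidesteps the circular dependence between which colors are used and which edges the path selected.

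The step I expect to be the main obstacle is the constant-degree vertices, which genuinely occur here (the minimum degree is $2$ \whp\ at this precise $p$), since for them there is no slack: both incident edges are forced into any Hamilton cycle, yet the color restriction may forbid one of them, and they cannot serve as rotation endpoints. I would treat them in a preprocessing phase, committing each low-degree vertex to a short rainbow path segment passing through it, drawn from a small reserved palette of pairwise distinct colors, and then run the rotation--extension on the contracted instance in which these committed segments appear as internal, protected arcs whose endpoints are never used for rotations. Guaranteeing that the reserved colors are disjoint from those needed later, that the committed segments stay internal throughout all rotations, and that enough boosters of fresh color survive to close the final cycle is the delicate part; it is exactly here that working at the sharp threshold, rather than at density $(1+\e)\log n/n$, makes the analysis tight.
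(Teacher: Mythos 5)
First, a point of order: the paper does not prove this statement at all --- it is quoted from Ferber and Krivelevich \cite{FerK} as background, so there is no in-paper proof to compare yours against. Your proposal therefore has to be judged as a standalone proof of the Ferber--Krivelevich theorem, and judged that way it has a genuine gap at its central step. P\'osa's rotation--extension machinery is a statement about a \emph{fixed} graph: one needs the expansion property (and the $\Theta(n^2)$ booster count) to hold deterministically for the graph in which the rotations are performed, and the lemma is then invoked for all endpoint sets simultaneously. In your scheme the graph in which you rotate is not fixed: it is the adaptively thinned graph of edges whose colors, revealed on demand, avoid the \emph{current} path's color set. That forbidden set changes after every rotation and extension, so an edge whose color was revealed and rejected at one stage is neither usable as a fresh random edge later (its color is now conditioned) nor excludable for free (discarding all previously queried edges thins the graph adaptively, not independently, and by an amount your argument does not control). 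The sentence ``the deferred-revelation bookkeeping stays consistent and sidesteps the circular dependence'' is precisely the assertion that needs proof, and the justification offered --- that the usable neighborhood ``behaves like an independent thinning by a constant factor'' --- is false as stated, because the thinning is correlated with the history of the process. Any correct proof needs a device that breaks this circularity: fixing the forbidden color set \emph{before} expansion is invoked (e.g.\ a color-splitting or multi-round scheme), a union bound over forbidden sets (hopeless here, there are $\binom{(1+\varepsilon)n}{n}$ of them), or a coupling with an uncolored random (hyper)graph of the kind this paper uses for its Theorem 5. That device is the actual content of the theorem and is missing from your sketch.

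There are also secondary, fixable errors. At this $p$ one has $\delta(G)\ge 2$ \whp, and for slowly growing $\omega$ there really are vertices of degree exactly $2$; but your preprocessing has the logic backwards. The two forced edges at such a vertex arrive with their own already-assigned random colors --- you cannot ``draw'' their colors from a reserved palette. What you can do is observe that \whp\ all forced edges carry pairwise distinct colors (there are $o(\log n)$ such vertices, so the collision probability is $o(\log^2 n)/n=o(1)$), condition on this, and forbid those few colors everywhere else; the reservation must run in that direction.
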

\noindent
For expressions such as $\kappaNew = (1+\varepsilon)n$ that clearly have to be an integer, we round up or down but do not specify which: the reader can choose either one, without affecting the argument.

Ferber and Krivelevich~\cite{FerK} were the first to study rainbow Hamilton cycles in $H_{n,p}^{(k)}$. They showed the following. (Recall that $m_\ell = n/(k-\ell)$ is the number of edges in an $\ell$-Hamilton cycle.)
\begin{theorem}[\cite{FerK}]\label{thm:fk}
Let $k>\ell \ge 1$ be integers. Suppose that $n$ is a multiple of $k-\ell$. Let $p\in [0,1]$ be such that \whp\  $H_{n,p}^{(k)}$ contains an $\ell$-Hamilton cycle.
Then, for every $\varepsilon = \varepsilon(n)\ge 0$, letting $\kappaNew = (1+\varepsilon) m_\ell$ and $q = \kappaNew p / (\varepsilon m_\ell + 1)$ we have that \whp\ 
$H_{n,q,\kappaNew}^{(k)}$  contains a rainbow $\ell$-Hamilton cycle.
\end{theorem}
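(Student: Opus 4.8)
The plan is to exploit a clean distributional identity that the coupling constant $q=rp/(\varepsilon m_\ell+1)$ is engineered to produce. Generate $H_{n,q,r}^{(k)}$ by first assigning every $k$-tuple $e$ a uniform color $\chi(e)\in[r]$, and then, independently, including each tuple with probability $q$. Then for each color $c$ the class $G_c=\{e:\ \chi(e)=c,\ e\text{ included}\}$ contains each fixed tuple independently with probability $q/r=p/(\varepsilon m_\ell+1)$, and classes with distinct colors are independent across distinct tuples. Consequently, for any set $T\subseteq[r]$ of size $|T|=\varepsilon m_\ell+1$, the merged hypergraph $G_T=\bigcup_{c\in T}G_c$ has each tuple present with probability exactly $|T|\cdot q/r=p$, so $G_T$ is distributed precisely as $H_{n,p}^{(k)}$ and hence \whp\ contains an $\ell$-Hamilton cycle. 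This is the engine of the argument: the colored model secretes, inside small color-bundles, perfect copies of the hypergraph guaranteed to be Hamiltonian by hypothesis.

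Next I would set up the search for a \emph{rainbow} cycle. Since $q\ge p$, monotonicity already gives an $\ell$-Hamilton cycle in the uncolored $H_{n,q}^{(k)}=G_{[r]}$ \whp; the task is to produce one whose $m_\ell$ edges carry $m_\ell$ \emph{distinct} colors, using the surplus of $\varepsilon m_\ell$ colors beyond the $m_\ell$ strictly needed. Note that $r-(\varepsilon m_\ell+1)=m_\ell-1$, so after reserving one color-bundle of size $\varepsilon m_\ell+1$ there remain exactly $m_\ell-1$ further colors; this matches the number of additional edges of a cycle and suggests an iterative scheme in which the bundles are arranged (overlapping or nested) so that a Hamilton cycle located in the appropriate $G_T$ can be recolored, or its colliding edges rerouted, into fresh colors drawn from the reserve. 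The distinctness constraint would then be verified either by a Hall/system-of-distinct-representatives argument on a bipartite graph between the cycle's edges and the available colors, or by a second moment computation on the number of rainbow $\ell$-Hamilton cycles, bootstrapped from the concentration of the number of uncolored ones (the same second-moment machinery behind Theorem~\ref{thm:tight}).

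The hard part will be precisely this upgrade from plain to rainbow Hamiltonicity. The obstruction is that every present tuple bears exactly \emph{one} color, so a single prescribed cycle is rainbow only with probability $\prod_{i=0}^{m_\ell-1}(1-i/r)$, which is exponentially small; one therefore cannot simply color a given cycle and must instead harvest distinctness either from the multiplicity of Hamilton cycles or from the reserve of surplus colors. The delicate technical point is controlling the dependence among the colors $\chi(E_1),\dots,\chi(E_{m_\ell})$ of the $m_\ell$ cycle-edges well enough to apply a union bound, Hall's condition, or a second-moment estimate; the value $q=rp/(\varepsilon m_\ell+1)$, equivalently the per-color density $p/(\varepsilon m_\ell+1)$, is exactly the tuning that makes the merged bundles reproduce $H_{n,p}^{(k)}$ while leaving enough color slack for the distinctness repair to succeed \whp.
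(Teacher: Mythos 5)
First, a point of orientation: the paper you were given does not actually prove Theorem~\ref{thm:fk} at all --- it is quoted verbatim from Ferber and Krivelevich \cite{FerK} --- so your attempt must be judged on its own merits rather than against an internal argument. Your opening observation is correct, and it is indeed exactly what the value $q=\kappa p/(\varepsilon m_\ell+1)$ is engineered for: writing $t=\varepsilon m_\ell+1$, for any \emph{fixed} set $T$ of $t$ colors the subhypergraph $G_T$ of $k$-tuples that are present \emph{and} colored from $T$ is distributed exactly as $H_{n,p}^{(k)}$, hence \whp\ contains an $\ell$-Hamilton cycle. This identity is the right starting point.

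Everything after that, however, is a genuine gap, and it sits precisely where you concede ``the hard part will be.'' Worse, the engine as you deploy it produces objects that can never be upgraded: every Hamilton cycle found inside $G_T$ has all of its $m_\ell$ edge colors confined to $T$, and $|T|=\varepsilon m_\ell+1<m_\ell$ whenever $\varepsilon<1-1/m_\ell$, so such a cycle is structurally incapable of being rainbow; this is not a matter of repairing a few collisions. None of the repair mechanisms you list survives scrutiny. ``Recoloring'' is impossible, since the colors are part of the random instance and cannot be altered. ``Rerouting colliding edges'' is meaningless for rigid cycles --- for a tight cycle the edge set is determined by the cyclic vertex ordering, so there is no local substitute for an offending edge. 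A Hall/SDR argument between the cycle's edges and ``available colors'' does not apply, because each present edge carries exactly one color: there are no lists from which to choose representatives, only a fixed map whose injectivity one would have to be lucky about (and, as you yourself compute, a prescribed cycle is rainbow with probability $\prod_{i=0}^{m_\ell-1}(1-i/\kappa)$, which is exponentially small). The second-moment suggestion is incompatible with the nature of the theorem, which is a black-box reduction valid for \emph{every} $p$ above the Hamiltonicity threshold: the hypothesis supplies only \whp\ existence of one cycle in $H_{n,p}^{(k)}$, with no concentration of cycle counts to bootstrap from. Finally, the natural fallback --- show \whp\ that $G_T$ is Hamiltonian \emph{for every} $t$-set $T$, then pick $T$ inside the colors unused by some extremal cycle --- also fails, since the hypothesis gives failure probability $o(1)$ per $T$, not $o\bigl(\binom{\kappa}{t}^{-1}\bigr)$, and conditioning on a cycle chosen from the data destroys the distributional identity. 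What is missing is the actual content of the Ferber--Krivelevich proof: a coupling in which distinctness of colors is built into the object in which one searches for the cycle, rather than checked afterwards on a cycle found in a color-restricted or color-oblivious subhypergraph. As written, your proposal establishes the distributional identity and the trivial monotonicity remark $q\ge p$, but not the theorem.
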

\noindent
Observe that if $\varepsilon$ is a constant, then by losing a multiplicative constant in the threshold, a rainbow $\ell$-Hamilton \whp\ exists. By combining this result with Theorems \ref{thm:tight} and \ref{thm:loose} one can obtain some explicit values of $q$. However, for small $\varepsilon$ (including $\varepsilon=0$) Theorem~\ref{thm:fk} does not provide optimal $q$. In our results we focus on the case when $\kappaNew = m_\ell$. (But we also allow more colors.) Here we state our first result.

\begin{theorem}\label{thm1}
Let $k > \ell \ge 2$ and $\e>0$ be fixed.
Let $c\geq 1/(k-\ell)$ and $\kappaNew=cn$. Then:

\begin{enumerate}[(i)]
\item\label{thm1:1} For all integers $k> \ell\geq 2$, if
\[
p\leq
\begin{cases}
 (1-\e)e^{k-\ell+1}/n^{k-\ell} & \text{ if } c = 1/(k-\ell)\\
 (1-\e)\bfrac{c-1/(k-\ell)}{c}^{(k-\ell)c-1}e^{k-\ell+1}/n^{k-\ell} & \text{ if } c > 1/(k-\ell),
\end{cases}
\] then
\whp\ $H_{n,p,\kappaNew}^{(k)}$ is not rainbow $\ell$-Hamiltonian.
\item\label{thm1:2} For all integers $k>\ell \ge 3$, there exists a
constant $K=K(k)$ such that if $p\geq K/n^{k-\ell}$
and $n$ is a multiple of $k-\ell$ then
 $H_{n,p,\kappaNew}^{(k)}$ is rainbow $\ell$-Hamiltonian \whp. \label{thm:a}
\item\label{thm1:3} If $k>\ell=2$ and $p\geq \omega/n^{k-2}$
and $n$ is a multiple of $k-2$,
then $H_{n,p,\kappaNew}^{(k)}$ is rainbow $2$-Hamiltonian \whp.\label{thm:b}
\item\label{thm1:4} For all $k\geq 4$, if
\[
p\geq 
\begin{cases}
(1+\e)e^2/n & \text{ if } c=1\\
(1+\e) \bfrac{c-1}c^{c-1} e^2 / n & \text{ if } c>1,
\end{cases}
\] then
\whp\ $H_{n,p,\kappaNew}^{(k)}$ is rainbow $(k-1)$-Hamiltonian, i.e. it contains a rainbow \emph{tight} Hamilton cycle.
\label{thm:c}
\end{enumerate}
\end{theorem}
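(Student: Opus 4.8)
The plan is to prove part~(iv) by the second moment method, mirroring the proof of Theorem~\ref{thm:tight}(iv) in \cite{DF} but carrying the random colouring through each step. Write $\kappaNew=cn$ and let $X$ count the rainbow tight Hamilton cycles of $H^{(k)}_{n,p,\kappaNew}$; recall such a cycle has $n$ edges. There are $N=\ooi\,\tfrac{n!}{2n}$ tight Hamilton cycles, a fixed one is present with probability $p^{n}$, and, independently, its $n$ edges get distinct colours with probability $P_1=\kappaNew!/((\kappaNew-n)!\,\kappaNew^{\,n})$, so $\E[X]=N p^{n}P_1$. Stirling's formula gives $P_1=e^{-n+o(n)}$ when $c=1$ and $P_1=\bfrac{c}{c-1}^{(c-1)n}e^{-n+o(n)}$ when $c>1$; substituting the stated $p$, these colour factors cancel against those hidden in $p^{n}$ and I get $\E[X]=(1+\e)^{n}e^{o(n)}\to\infty$. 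The same computation at the sub-threshold value of part~(i), together with Markov's inequality, will give the non-existence statement of part~(i) in the tight case; the extra factor $e$ over the uncolored threshold $e/n$ of Theorem~\ref{thm:tight} is exactly the cost $P_1\approx e^{-n}$ of the colouring.

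Next I would organise the second moment by overlap. Grouping the ordered pairs $(C_1,C_2)$ of tight Hamilton cycles by $j=\card{E(C_1)\cap E(C_2)}$ and writing $M_j$ for the number with a given $j$, presence and colouring are independent, and a direct count of the uniform colourings of the $2n-j$ edges of $E(C_1)\cup E(C_2)$ that make both cycles rainbow (the $j$ shared edges take distinct colours, and the remaining $n-j$ edges of each cycle avoid those colours and one another) yields
\[
P_{\mathrm{rb}}(j)=\frac{\kappaNew!\,(\kappaNew-j)!}{\kappaNew^{\,2n-j}\big((\kappaNew-n)!\big)^{2}},
\qquad\text{so}\qquad
\frac{P_{\mathrm{rb}}(j)}{P_1^{2}}=\prod_{i=0}^{j-1}\frac{1}{1-i/\kappaNew}.
\]
Hence $\E[X^{2}]=\sum_{j=0}^{n}M_j\,p^{\,2n-j}P_{\mathrm{rb}}(j)$ and
\[
\frac{\E[X^{2}]}{(\E[X])^{2}}
=\sum_{j=0}^{n}\frac{M_j}{N^{2}}\,p^{-j}\prod_{i=0}^{j-1}\frac{1}{1-i/\kappaNew}.
\]

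The key point is that the $M_j$ are exactly the combinatorial counts from the uncolored computation in \cite{DF}, which establishes $\sum_{j=0}^{n}M_j p_0^{-j}=\ooi N^{2}$ at the uncolored threshold $p_0=(1+\e)e/n$. So it suffices to prove the pointwise domination $p^{-j}\prod_{i=0}^{j-1}(1-i/\kappaNew)^{-1}\le p_0^{-j}$ for $0\le j\le n$ at the stated $p$; then $\E[X^{2}]/(\E[X])^{2}\le\sum_j M_j p_0^{-j}/N^{2}=1+o(1)$, and as the ratio is always at least $1$ it equals $1+o(1)$, so Paley--Zygmund (and monotonicity of the event in $p$) finishes the proof. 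Writing $\gamma_c=(c-1)\log\tfrac{c}{c-1}$ (with $\gamma_1=0$), I have $\log(p/p_0)=1-\gamma_c$, so in logarithmic form the domination is $\sum_{i=0}^{j-1}\log\tfrac{1}{1-i/\kappaNew}\le(1-\gamma_c)j$. Since the summand increases in $i$, a left Riemann-sum bound gives $\sum_{i=0}^{j-1}\log\tfrac{1}{1-i/\kappaNew}\le j-\varphi(j)$ with $\varphi(j)=(\kappaNew-j)\log\tfrac{\kappaNew}{\kappaNew-j}$, and it remains to check $\varphi(j)\ge\gamma_c j$. But $\varphi$ is concave and meets the line $\gamma_c j$ at $j=0$ and at $j=n$ (where $\varphi(n)=(c-1)n\log\tfrac{c}{c-1}=\gamma_c n$), so $\varphi(j)\ge\gamma_c j$ on $[0,n]$, as needed.

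The only genuinely new estimate here is this pointwise domination; the entire delicate analysis of how two tight Hamilton cycles overlap --- the bound on $M_j$ driving $\sum_j M_j p_0^{-j}\to N^2$ --- is inherited verbatim from \cite{DF} and is the technical heart there. I expect the main obstacle to be conceptual rather than computational: the rainbow weight $\prod_{i<j}(1-i/\kappaNew)^{-1}$ grows with the overlap $j$ and could a priori tilt the second moment toward large overlaps, and the content of the domination is that raising $p$ over $p_0$ by the factor $e^{\,1-\gamma_c}$ offsets this reweighting for every $j$ at once, so that near-disjoint pairs keep dominating and no uncolored correction term is amplified. Finally, the non-sharp parts (ii) and (iii) I would deduce by comparison with the uncolored Theorem~\ref{thm:tight} via Theorem~\ref{thm:fk} (which costs only a constant factor once $c>1/(k-\ell)$), while part (i) is the matching first-moment bound above.
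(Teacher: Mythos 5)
Your treatment of part~(iv) is correct and is essentially the paper's own argument. The paper also runs the second moment method with the colouring factored out of the uncoloured overlap structure: writing $b$ for the number of shared edges, the colour weight per pair is exactly your
$\kappaNew^{b}(\kappaNew-b)!/\kappaNew!=\prod_{i=0}^{b-1}(1-i/\kappaNew)^{-1}$,
and the paper bounds it by $\left(e\bfrac{c-1}{c}^{c-1}\right)^{b}$ via a Stirling-type estimate together with a calculus claim that $\bfrac{\kappaNew-b}{\kappaNew}^{(\kappaNew-b)/b}$ is maximized at $b=n$. Your Riemann-sum-plus-concavity proof of the pointwise domination $\prod_{i=0}^{j-1}(1-i/\kappaNew)^{-1}\le e^{(1-\gamma_c)j}$ is an equivalent route to the same key inequality, and feeding it into the uncoloured second-moment estimate of \cite{DF} at $p_0=(1+\e)e/n$ is precisely how the paper concludes; the only cosmetic difference is that the paper works with Hamilton-cycle-inducing permutations and the $(b,a)$ bookkeeping of \cite{DF} (number of shared edges and number of paths they form) rather than with unordered cycles and $M_j$. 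Your first-moment computation matches the paper's proof of part~(i) in the tight case.

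The genuine gap is in your plan for parts~(ii) and~(iii). You propose to deduce them from the uncoloured Theorem~\ref{thm:tight} via Theorem~\ref{thm:fk}, and you correctly note that this costs only a constant factor ``once $c>1/(k-\ell)$'' --- but the theorem is stated for all $c\ge 1/(k-\ell)$, and the boundary case $\kappaNew=m_\ell$, i.e.\ $\varepsilon=0$ in Theorem~\ref{thm:fk}, is exactly the most restrictive case that is the point of the paper. There Theorem~\ref{thm:fk} gives $q=\kappaNew p=m_\ell p$, a loss of order $n$ rather than a constant, so your route proves nothing when $c=1/(k-\ell)$. The paper instead handles (ii) and (iii) inside the same second-moment framework as (iv): the colour weight is bounded crudely by $e^{b}$ (valid for every $c\ge1/(k-\ell)$, and a special case of your domination lemma since $\gamma_c\ge0$), and this factor is absorbed into the uncoloured overlap estimate of \cite{DF} (their equation~(10)), namely $N(b,a)p^{m_\ell-b}/\E(X)\lesssim\left(2k!ke^{k}/(n^{k-\ell}p)\right)^{b}n^{-a(\ell-2)}$, at the cost of replacing $K$ by a constant multiple (resp.\ rescaling $\omega$). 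Your own machinery closes this gap the same way, but as written the proposal fails for (ii) and (iii) in the headline case $\kappaNew=m_\ell$; note also that part~(i) is stated for all $\ell\ge2$ while your first moment is written only for tight cycles, though there the extension really is the same computation with $m_\ell$ edges in place of $n$.
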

\noindent
Consequently, if $k\ge 4$, then
\[
p = 
\begin{cases}
e^2/n & \text{ if } c=1\\
\bfrac{c-1}c^{c-1} e^2 / n & \text{ if } c>1
\end{cases}
\] 
is the sharp threshold for the existence of a rainbow tight Hamilton cycle. Furthermore, observe 
that $\lim_{c\to 1^+} \bfrac{c-1}{c}^{c-1} = 1$. Thus, in \eqref{thm1:4} the case $c>1$ approaches  the case $c=1$ in the continuous way. Finally, also observe that $\lim_{c\to \infty} \bfrac{c-1}{c}^{c-1} = 1/e$. Hence, when $c$ tends to infinity (that means that each edge receives a different color) the threshold function is $e/n$, which is consistent with Theorem~\ref{thm:tight}. The proof of Theorem~\ref{thm1} modifies the proof of Theorem~\ref{thm:tight}.

We also establish a similar result for loose Hamilton cycles. Recall that a loose Hamilton cycle of order $n$ has exactly $n/(k-1)$ edges. So for a rainbow loose Hamilton cycle we always need at least $n/(k-1)$ colors. Here we only consider this most restrictive case with $\kappaNew = n/(k-1)$.
\begin{theorem}\label{thm2}
Fix $k\ge 3$ and suppose that $n$ is a multiple of $k-1$. Let $\kappaNew=n/(k-1)$ and $p\ge \omega (\log n)/n^{k-1} $. Then, \whp\ $H_{n,p,\kappaNew}^{(k)}$ contains a rainbow loose Hamilton cycle.
\end{theorem}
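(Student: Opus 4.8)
The plan is to combine the fact that $p$ exceeds the loose-Hamiltonicity threshold of Theorem~\ref{thm:loose} by a growing factor $\omega$ with the observation that, since $\kappaNew=n/(k-1)=m_1$ equals the number of edges of a loose Hamilton cycle, every colour is abundant. Conditioning on the uncoloured hypergraph $H_{n,p}^{(k)}$ and assigning colours independently and uniformly, the edges of a fixed colour form (after the standard sparse coupling) an essentially independent copy of $H_{n,p'}^{(k)}$ with $p'=(1-o(1))p/\kappaNew\ge (1-o(1))(k-1)\omega(\log n)/n^{k}$, so each colour class has $\Theta(\omega\log n)$ edges in expectation. A Chernoff bound and a union bound over the $\kappaNew$ colours and $n$ vertices give that \whp\ every colour class has $\Theta(\omega\log n)$ edges, no vertex is isolated, and every vertex lies on edges of $\Theta(\omega\log n)$ distinct colours. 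The genuine difficulty is not a shortage of edges of any one colour but the global constraint that the $m_1$ cycle edges carry $m_1$ \emph{distinct} colours: because $\kappaNew=m_1$ exactly there are no spare colours, in contrast to the graph case $\kappaNew=(1+\varepsilon)n$ of Ferber--Krivelevich \cite{FerK}.

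Next I would build most of the cycle greedily and leave the hard endgame to absorption. A vertex has degree $\Theta(\omega\log n)$, so when extending a partial rainbow loose path from its current endpoint by an edge on $k-1$ fresh vertices of an unused colour, a valid extension exists \whp\ as long as a fixed positive fraction of vertices and of colours is still unused: both the ``fresh vertex'' and the ``fresh colour'' constraints forbid only an $o(1)$ fraction of the $\Theta(\omega\log n)$ candidate edges at the endpoint. Iterating, I obtain a rainbow loose path (or a bounded number of vertex-disjoint rainbow loose paths) covering all but $o(n)$ vertices and using all but $o(n)$ colours. The greedy analysis breaks down precisely when only $o(n)$ vertices and $o(n)$ colours remain, since then almost every edge at the current endpoint meets a used vertex or repeats a used colour.

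The core of the proof is therefore a rainbow \emph{absorbing} structure that simultaneously absorbs the leftover vertices and the leftover colours. Before the greedy phase I would reserve an independent slice of the randomness (writing $p=p_1+p_2$ and reserving a $1/\sqrt{\omega}$ fraction of the edges of each colour) and use it to build a loose path $A$ with the property that for every sufficiently small set $W$ of vertices together with every equally small set $D$ of colours there is a rerouting of a sub-path of $A$ through $W$ that uses exactly the colours of $D$ and keeps $A$ a rainbow loose path. After the greedy phase leaves a vertex set $W$ and a colour set $D$ with $|W|,|D|=o(n)$, I absorb $(W,D)$ into $A$ and close the resulting spanning rainbow loose path into a cycle using the reserved round $H_{n,p_2}^{(k)}$ and the reserved colours; since a spanning loose cycle has exactly $\kappaNew=m_1$ edges, distinctness of colours forces each colour to appear exactly once, as required. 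Existence of the absorber is shown by the usual argument: candidate ``absorbing gadgets'' for a given vertex-colour demand appear in $H_{n,p_1}^{(k)}$ in expected number $\Theta(\omega\log n)$, so a random selection yields, \whp, an absorber flexible enough for all small $W$ and $D$.

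I expect the main obstacle to be exactly this endgame in the zero-spare-colour regime $\kappaNew=m_1$. One must rule out, at a threshold only a factor $\omega$ above the isolated-vertex (coupon-collector) bound, both the empty-colour and the uncovered-vertex obstructions \emph{simultaneously}, and the absorber must be flexible in vertices and colours at once: the last $o(n)$ vertices have to be placed into edges carrying precisely the last $o(n)$ prescribed colours. It is here, rather than in the bulk greedy phase, that the growing factor $\omega$ --- guaranteeing $\Theta(\omega\log n)$ candidate edges for every vertex-colour demand --- is indispensable.
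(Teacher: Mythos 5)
Your greedy phase is plausible, but the absorption step --- which you correctly identify as the core of the argument --- rests on a quantitative claim that is false in the critical regime $p=\Theta(\omega\log n/n^{k-1})$ with $\omega$ growing slowly. You assert that candidate absorbing gadgets ``for a given vertex-colour demand appear in $H_{n,p_1}^{(k)}$ in expected number $\Theta(\omega\log n)$''. That count is correct if the gadget fixes a vertex $w$ only (the degree of $w$ is $\binom{n-1}{k-1}p=\Theta(\omega\log n)$) or a colour $d$ only (the colour class of $d$ has $\Theta(\omega\log n)$ edges), but any bounded-size gadget serving a \emph{joint} demand $(w,d)$ must contain an edge through $w$ coloured $d$, and the expected number of such edges is
\[
\binom{n-1}{k-1}\,p\cdot\frac{k-1}{n}=\Theta\!\left(\frac{\omega\log n}{n}\right)=o(1).
\]
Equivalently: each vertex lies on only $O(\omega\log n)=o(\kappaNew)$ edges, so it sees an $o(1)$ fraction of the colours, and \whp\ almost every pair $(w,d)$ admits \emph{no} edge at $w$ of colour $d$ whatsoever. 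Hence no family of bounded-size gadgets, however selected, can be ``flexible enough for all small $W$ and $D$'' in the sense you need: you cannot prescribe which leftover colour is carried by the edge absorbing a given leftover vertex. Any repair must replace the per-pair gadget count by a global matching argument between leftover vertices, leftover colours and reserved slots (a bipartite graph in which $w\sim d$ iff some reserved edge at $w$ happens to have colour $d$), and verifying a Hall-type condition there down to the last few vertices and colours --- with zero spare colours, so that the colours of the final edges are completely forced --- is precisely the hard core of the theorem, which your sketch leaves unproved. The same rigidity afflicts your closing step: the edge that closes the cycle must carry the unique unused colour, an event of probability $O(\omega\log n/n)$ per candidate edge.

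For comparison, the paper sidesteps this difficulty entirely with a single reduction: an edge $\set{x_1,\dots,x_k}$ of colour $c$ is re-read as an edge $\set{x_1,\dots,x_k,c}$ of a $(k+1)$-uniform hypergraph on the $n$ original vertices together with $\kappaNew=n/(k-1)$ new ``colour vertices'', and a rainbow loose Hamilton cycle in $H_{n,p,\kappaNew}^{(k)}$ corresponds exactly to a loose Hamilton cycle of the auxiliary hypergraph whose consecutive edges intersect in original vertices. Covering all colours thereby becomes covering all vertices, the two constraints are handled symmetrically by the already-known uncoloured machinery of \cite{DF_loose} (with Ferber's coupling trick \cite{Fer} to remove the parity assumption on $n/(k-1)$), and no absorber or colour bookkeeping is needed. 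If you wish to pursue an absorption proof, you need a genuinely two-sided vertex-and-colour absorber, and the gadget count you propose does not deliver one.
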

\noindent
The proof is a modification of the proof of Theorem~\ref{thm:loose}.

\medskip

{\bf Some notation:} For sequences $A_n,B_n,n\geq 1$ we write $A_n\approx B_n$ to mean that $A_n=(1+o(1))B_n$ as $n\to\infty$. Similarly, we write $A_n\lesssim B_n$ to mean that $A_n\leq (1+o(1))B_n$ as $n\to \infty$.

\section{Proof of Theorem~\ref{thm1}}

The proof modifies the proof of Theorem~3 from~\cite{DF}. 

Let $([n],\cE)$ be a $k$-uniform hypergraph.
A permutation $\p$ of $[n]$ is {\em $\ell$-Hamilton cycle inducing} if
$$E_\p(i)=\set{\p((i-1)(k-\ell)+j):\;j\in [k]}\in \cE\ for\ all\ i\in [n/(k-\ell)].$$
(We use the convention $\p(n+r)=\p(r)$ for $r>0$.)
Let the term {\em hamperm} refer to such a permutation.

Let $Y$ be the random variable that counts the number of rainbow
hamperms $\p$ for $H_{n,p,\kappaNew}^{(k)}$. Every $\ell$-Hamilton cycle induces at least one hamperm and so we can concentrate on
estimating $\Pr(Y > 0)$.

Observe that 
\[
\E(Y) = n! \cdot p^{n/(k-\ell)} \cdot \frac{(\k)_{n/(k-\ell)}}{\k^{n/(k-\ell)}},
\]
where $(x)_t = x(x-1)\cdots(x-t+1)$ is the \emph{falling factorial}.
This is because $\pi$ induces an $\ell$-Hamilton cycle if and only if a certain $n/(k-\ell)$ edges are present and are colored rainbow. 

Now let $c > 1/(k-\ell)$. Then, by Stirling's formula we get
\begin{align*}
\E(Y) 
&= n!p^{n/(k-\ell)} \frac{\k!}{\k^{n/(k-\ell)} (\k-n/(k-\ell))! }\\
&\approx\sqrt{2\pi n} \bfrac{n}{e}^n p^{n/(k-\ell)}    \frac{\sqrt{\frac{\kappaNew}{\kappaNew - n/(k-\ell)}}\bfrac{\k}{e}^\k}{ \k^{n/(k-\ell)} \bfrac{\k-n/(k-\ell)}{e}^{\k-n/(k-\ell)}}\\
&= \sqrt{\frac{2\pi n \kappaNew}{\kappaNew - n/(k-\ell)}} \left( \frac{n p^{1/(k-\ell)}}{e^{1 + 1/(k-\ell)}} \cdot \bfrac{\k}{\k-n/(k-\ell)}^{\k/n-1/(k-\ell)} \right)^n\\ 
&= \sqrt{\frac{2\pi n\kappaNew}{\kappaNew - n/(k-\ell)}} \left( \frac{n p^{1/(k-\ell)}}{e^{1 + 1/(k-\ell)}} \cdot \bfrac{c}{c-1/(k-\ell)}^{c-1/(k-\ell)} \right)^n. 
\end{align*}
Similarly for $c= 1/(k-\ell)$ we get
\[
\E(Y) \approx 2\pi n \sqrt{\frac{1}{k-\ell}} \left( \frac{n p^{1/(k-\ell)}}{e^{1 + 1/(k-\ell)}}  \right)^n. 
\]
Thus, if \[
p\leq
\begin{cases}
 (1-\e)e^{k-\ell+1}/n^{k-\ell} & \text{ if } c = 1/(k-\ell)\\
 (1-\e)\bfrac{c-1/(k-\ell)}{c}^{(k-\ell)c-1}e^{k-\ell+1}/n^{k-\ell} & \text{ if } c > 1/(k-\ell),
\end{cases}
\] then
$\E(Y) = o(1)$. This verifies part~\eqref{thm1:1}.

Now we prove parts \eqref{thm1:2}-\eqref{thm1:4} by the second moment method. First observe that if
\[
p\geq
\begin{cases}
 (1+\e)e^{k-\ell+1}/n^{k-\ell} & \text{ if } c = 1/(k-\ell)\\
 (1+\e)\bfrac{c-1/(k-\ell)}{c}^{(k-\ell)c-1}e^{k-\ell+1}/n^{k-\ell} & \text{ if } c > 1/(k-\ell),
\end{cases}
\] then
$\E(Y)\to\infty$ together with $n$.

Fix a hamperm $\p$. Let $H(\p)=(E_\p(1),E_\p(2),\ldots,E_{\p}(m_\ell))$ be the Hamilton cycle induced by
 $\p$. Then let $N(b,a)$ be the number of permutations $\p'$
such that $|E(H(\p))\cap E(H(\p^\prime))|=b$ and $E(H(\p))\cap E(H(\p^\prime))$
consists of $a$ edge disjoint paths.
Here a path is a maximal sub-sequence $F_1,F_2,\ldots,F_q$ of the edges of $H(\p)$ such that $F_i\cap F_{i+1}\neq \emptyset$
for $1\leq i<q$. The set $\bigcup_{j=1}^qF_j$ may contain other edges of $H(\p)$.
Observe that $N(b,a)$ does not depend on $\p$.

Now,
\begin{multline*}
\frac{\E(Y^2)}{\E(Y)^2}
\leq \frac{n!N(0,0)p^{2n/(k-\ell)} \left(\frac{(\k)_{n/(k-\ell)}}{\k^{n/(k-\ell)}}\right)^2}{\E(Y)^2}\\+\sum_{b=1}^{n/(k-\ell)} \sum_{a=1}^{b}
\frac{n! N(b,a) p^{2n/(k-\ell)-b}} {\E(Y)^2}\cdot  \frac{(\k)_{n/(k-\ell)}}{\k^{n/(k-\ell)}} \cdot \frac{(\k-b)_{n/(k-\ell)-b}}{\k^{n/(k-\ell)-b}}.
\end{multline*}
Since trivially $N(0,0)\le n!$, we get
\[
\frac{\E(Y^2)}{\E(Y)^2}
\leq 1+\sum_{b=1}^{n/(k-\ell)} \sum_{a=1}^{b}
\frac{n! N(b,a) p^{2n/(k-\ell)-b}} {\E(Y)^2}\cdot  \frac{(\k)_{n/(k-\ell)}}{\k^{n/(k-\ell)}} \cdot \frac{(\k-b)_{n/(k-\ell)-b}}{\k^{n/(k-\ell)-b}}.
\]
Let $X$ be the number of $\ell$-hamperms in $H_{n,p}^{(k)}$. Then,
\[
\E(X) = n! p^{n/(k-\ell)} \quad \text{ and } \quad   \E(Y)=\E(X)\cdot \frac{(\k)_{n/(k-\ell)}}{\k^{n/(k-\ell)}}.
\]
Consequently,
\begin{align}
\frac{\E(Y^2)}{\E(Y)^2}
&\leq 1+\sum_{b=1}^{n/(k-\ell)} \sum_{a=1}^{b}
\frac{n! N(b,a) p^{2n/(k-\ell)-b}} {\E(X)^2}\cdot  \frac{(\k)_{n/(k-\ell)}}{\k^{n/(k-\ell)}} \cdot \frac{(\k-b)_{n/(k-\ell)-b}}{\k^{n/(k-\ell)-b}}  \cdot 
\bfrac{\k^{n/(k-\ell)}}{(\k)_{n/(k-\ell)}}^2 \notag \\
&=1+\sum_{b=1}^{n/(k-\ell)} \sum_{a=1}^{b} \frac{N(b,a)p^{n/(k-\ell)-b}} {\E(X)}\cdot \k^{b}\cdot \frac{(\k-b)_{n/(k-\ell)-b}}{(\k)_{n/(k-\ell)}}\notag \\
&=1+\sum_{b=1}^{n/(k-\ell)} \sum_{a=1}^{b} \frac{N(b,a)p^{n/(k-\ell)-b}} {\E(X)}\cdot \k^{b}\cdot \frac{(\k-b)!}{\k!} \notag \\
&\le 1+\sum_{b=1}^{n/(k-\ell)} \sum_{a=1}^{b} \frac{N(b,a)p^{n/(k-\ell)-b}} {\E(X)}\cdot e^{b}\bfrac{\k-b}{\k}^{\k-b}.\label{thm1:var}
\end{align}

\medskip

\textbf{Part \eqref{thm1:2}: $\ell \ge 3$}

\medskip

We trivially bound $\bfrac{\k-b}{\k}^{\k-b} \le 1$. It was shown in \cite{DF} (equation (10)) that
\[
\frac{N(b,a)p^{n/(k-\ell)-b}} {\E(X)} \lesssim \left( \frac{2k!ke^k}{n^{k-\ell}p} \right)^b \frac{1}{n^{a(\ell-2)}}.
\]
Thus, 
\begin{align*}
\frac{\E(Y^2)}{\E(Y)^2} 
&\le 1+\sum_{b=1}^{n/(k-\ell)} \sum_{a=1}^{b} \frac{N(b,a)p^{n/(k-\ell)-b}} {\E(X)}\cdot e^{b}\\
&\lesssim 1+\sum_{b=1}^{n/(k-\ell)} \sum_{a=1}^{b} \left( \frac{2k!ke^k}{n^{k-\ell}p} \right)^b \frac{1}{n^{a(\ell-2)}} \cdot e^{b}\\
&\le 1+\sum_{b=1}^{n/(k-\ell)} \sum_{a=1}^{b} \left( \frac{2k!ke^{k+1}}{n^{k-\ell}p} \right)^b \frac{1}{n^{a}}.
\end{align*}
Set $K = 4 k!ke^{k+1}$ and $p = K/n^{k-\ell}$. Thus,
\[
\frac{\E(Y^2)}{\E(Y)^2}  \le 1+\sum_{b=1}^{n/(k-\ell)} \sum_{a=1}^{b} \frac{1}{2^b} \cdot \frac{1}{n^{a}}
\le 1+ \left( \sum_{b=1}^{n} \frac{1}{2^b}\right)
\left(\sum_{a=1}^n \frac{1}{n^{a}}\right) \approx 1.
\]

\medskip

\textbf{Part \eqref{thm1:3}: $\ell = 2$}

\medskip

Let $p \ge \omega/n^{k-2}$. Similarly as in the previous case
\begin{align*}
\frac{\E(Y^2)}{\E(Y)^2} 
&\le 1+\sum_{b=1}^{n/(k-2)} \sum_{a=1}^{b} \left( \frac{2k!ke^k}{n^{k-2}p} \right)^b  \cdot e^{b}\\
&\le 1+\sum_{b=1}^{n/(k-2)} \sum_{a=1}^{b} \left( \frac{2k!ke^{k+1}}{\omega} \right)^b\\
&\le 1+\sum_{b=1}^{n/(k-2)} b \left( \frac{2k!ke^{k+1}}{\omega} \right)^b \approx 1.
\end{align*}

\medskip

\textbf{Part \eqref{thm1:4}: $\ell = k-1$} (tight cycles)

\medskip

If $c=1$ (that means $\k = n$), then we trivially bound $\bfrac{\k-b}{\k}^{\k-b} \le 1$. Otherwise, we  use a simple fact.
\begin{claim} Let $\kappaNew = cn$, where $c >1$. Then,
\[
\max_{0<b\leq n}{\bfrac{\kappaNew-b}{\kappaNew}^\frac{\kappaNew-b}{b}}=\bfrac{\kappaNew-n}{\kappaNew}^\frac{\kappaNew-n}{n}=\bfrac{c-1}c^{c-1}.
\]
\end{claim}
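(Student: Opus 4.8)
The plan is to treat $b$ as a continuous variable and to show that the function
\[
f(b)=\bfrac{\k-b}{\k}^{\frac{\k-b}{b}}
\]
is strictly increasing on $(0,n]$, so that its maximum is attained at the right endpoint $b=n$. Since the base $(\k-b)/\k$ lies in $(0,1)$, the quantity $f(b)$ is positive, and hence maximizing $f$ is the same as maximizing $\ln f(b)=\frac{\k-b}{b}\ln\brac{\frac{\k-b}{\k}}$. Once monotonicity is established, substituting $\k=cn$ at $b=n$ gives $(\k-n)/\k=(c-1)/c$ and $(\k-n)/n=c-1$, which yields both of the claimed equalities at once.

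To verify monotonicity I would substitute $x=b/\k$, so that $x$ ranges over $(0,1/c]$ and
\[
\ln f(b)=\frac{1-x}{x}\,\ln(1-x)=:h(x).
\]
As $x=b/\k$ is increasing in $b$, it suffices to show that $h$ is increasing in $x$ on $(0,1)$. A direct differentiation gives
\[
h'(x)=\frac{-\ln(1-x)-x}{x^2},
\]
so the sign of $h'$ is governed entirely by the numerator. Here the only real ingredient is the elementary inequality $\ln(1-x)\le -x$ for $x\in(0,1)$ (equivalently $-\ln(1-x)\ge x$), which follows from $\ln(1+y)\le y$ applied with $y=-x$. This makes the numerator strictly positive on $(0,1)$, whence $h'(x)>0$ and $h$ is strictly increasing.

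Combining the two steps, $h$, and therefore $f$, attains its maximum on $(0,1/c]$ (equivalently $(0,n]$) at the right endpoint, giving $\max_{0<b\le n}f(b)=f(n)=\bfrac{c-1}c^{c-1}$, as required. As a sanity check one may note $\lim_{x\to 0^+}h(x)=-1$, so that $f$ approaches $1/e$ near $b=0$, consistent with the $c\to\infty$ remark following the theorem. The computation is entirely elementary, so I do not expect any substantial obstacle; the only point requiring a little care is the reduction via logarithms together with the substitution $x=b/\k$, after which the conclusion rests on the single standard inequality above.
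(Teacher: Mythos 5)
Your proof is correct and takes essentially the same approach as the paper: treat $b$ as a continuous variable via a linear substitution, differentiate, and conclude monotonicity from the elementary inequality $\log(1-y)\le -y$, so the maximum sits at the right endpoint $b=n$. The only cosmetic differences are that you normalize by $\kappaNew$ rather than by $n$ and differentiate $\log f$ instead of $f$ itself; the substance of the argument is identical.
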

\begin{proof}[Proof of the claim]
Let $x=b/n$. Note that since $1\leq b\leq n$,  $x\in(0,1]$ and $c>x$. Then
\[
\bfrac{\kappaNew-b}{\kappaNew}^\frac{\kappaNew-b}{b}=\bfrac{c-b/n}{c}^\frac{c-b/n}{b/n}=\bfrac{c-x}{c}^\frac{c-x}{x}.
\]
Taking the derivative gives us
\[
\frac{d}{dx}\bfrac{c-x}{c}^\frac{c-x}{x}=-\frac{c}{x^2}\bfrac{c-x}{c}^\frac{c-x}{x}\left(\log\bfrac{c-x}c+\frac{x}c\right).
\]
Since $\frac{c}{x^2}\bfrac{c-x}{c}^\frac{c-x}{x}>0$, we have
\[
\sign\left(\frac{d}{dx}\bfrac{c-x}{c}^\frac{c-x}{x}\right)=-\sign\left(\log\bfrac{c-x}c+\frac{x}c\right)=-\sign\left(\log\left(1-\frac{x}c\right)+\frac{x}c\right)
\]
and since $\log\left(1-\frac{x}c\right) < \log e^{-\frac{x}c} = -\frac{x}c$ we get $\log\left(1-\frac{x}c\right)+\frac{x}c< 0$. Thus
\[
\frac{d}{dx}\bfrac{c-x}{c}^\frac{c-x}{x}>0
\]
for $0<x\leq 1$ and $c>x$. Thus $\bfrac{c-x}{c}^\frac{c-x}{x}$ is maximized at $x=1$ in our domain, which corresponds to $b=n$, proving the claim.
\end{proof}

Due to~\eqref{thm1:var} and the above claim we obtain
\[
\frac{\E(Y^2)}{\E(Y)^2} \leq
\begin{cases}
1+\sum_{b=1}^{n} \sum_{a=1}^{b} \frac{N(b,a)p^{n-b}} {\E(X)}\cdot e^b, & \text{ if } c=1\\
1+\sum_{b=1}^{n} \sum_{a=1}^{b} \frac{N(b,a)p^{n-b}} {\E(X)}\cdot \left( e \bfrac{c-1}c^{c-1} \right)^b, & \text{ if } c>1.\\
\end{cases}
\]
Moreover, it was shown in~\cite{DF} (equation (13)) that for $k\ge 4$,
\[
\sum_{b=1}^{n} \sum_{a=1}^{b} \frac{N(b,a)p^{n-b}} {\E(X)}
\le \frac{2c_kk!e^{k-1}}{n^{k-3}}\exp\set{\frac{2k!e^{k-1}}{n^{k-4}}}
\sum_{b=1}^{n}\bfrac{e}{np}^b
\]
for some positive constant $c_k$ that depends on $k$ only. Thus, 
\[
\frac{\E(Y^2)}{\E(Y)^2} \leq
\begin{cases}
1+\frac{2c_kk!e^{k-1}}{n^{k-3}}\exp\set{\frac{2k!e^{k-1}}{n^{k-4}}}
\sum_{b=1}^{n}\bfrac{e}{np}^b \cdot e^b, & \text{ if } c=1\\
1+\frac{2c_kk!e^{k-1}}{n^{k-3}}\exp\set{\frac{2k!e^{k-1}}{n^{k-4}}}
\sum_{b=1}^{n}\bfrac{e}{np}^b \cdot \left( e \bfrac{c-1}c^{c-1} \right)^b, & \text{ if } c>1.\\
\end{cases}
\]
Hence, both for $c=1$, $p\geq \frac{(1+\varepsilon)e^2}{n}$ and for $c>1$, $p\geq (1+\varepsilon) \bfrac{c-1}c^{c-1}\frac{e^2}{n}$, we get that
\[
\frac{\E(Y^2)}{\E(Y)^2} \le 1+\frac{2c_kk!e^{k-1}}{n^{k-3}}\exp\set{\frac{2k!e^{k-1}}{n^{k-4}}}
\sum_{b=1}^{n} \frac{1}{(1+\varepsilon)^b} \approx 1.
\] 

\medskip

In all three cases we showed that $\frac{\E(Y^2)}{\E(Y)^2} \lesssim 1$. Thus, the Chebyshev inequality completes the proof of Theorem~\ref{thm1}.

\section{Proof of Theorem~\ref{thm2}}
Let $n = (k-1)m$ and assume that $m$ is even. Clearly, $m=m_1 = \kappaNew$. In this case the proof is a straightforward modification of the proof of Theorem~2 from~\cite{DF_loose}. The idea being that we interpret an edge $\set{x_1,x_2,\ldots,x_k}$ of color $c\in [r]$ as an edge $\set{x_1,x_2,\ldots,x_k,c}$ in an auxilliary $(k+1)$-uniform hypergraph. Care must be taken in the proofs that (i) the components corresponding to colors are not used as the intersections of edges of the cycle and (ii) we do not have edges $\set{x_1,x_2,\ldots,x_k,c_1}$ and $\set{x_1,x_2,\ldots,x_k,c_2}$ i.e. we give the same edge two colors. Neither of these requirements are difficult to ensure. Indeed, requirement (ii) happens \whp.

In a little more detail, let $X=[m]$ and $Y=[m+1,n]$ and $Z=[n+1,n+m]$. Given $H=H_{n,p,m}^{(k)}$ we define the $(k+1)$-uniform hypergraph $\G$ with vertex set $[n]$ and an edge $\f(e)$ for each edge $e=\set{x_1,x_2,y_1,\ldots,y_{k-2}}$ of $H$ that satisfies $|e\cap X|=2$. Here $x_1,x_2\in X$ and $y_i\in Y,1\leq i\leq k-2$. We then let $\f(e)=\set{x_1,x_2,y_1,\ldots,y_{k-2},c(e)+n}$, where $c(e)$ is the color of $e$ and $c(e)+n \in Z$. The proof in \cite{DF_loose} can be adapted (and therefore we need to assume that $m$ is even) to show that \whp\ $\G$ contains a loose Hamilton cycle where consecutive edges intersect in vertices of $X$. We will give sufficient detail in Appendix A to justify this claim. A loose Hamilton cycle in $\G$ corresponds to a rainbow loose Hamilton cycle of $H$, where we re-interpret the vertex $z$ of an edge as the color $z-n$.

We can easily remove the requirement that $m$ be even by using an idea of Ferber \cite{Fer}.
In particular, one can follow his proof of Theorem \ref{thm:loose} to show that $\G$ contains a loose Hamilton cycle in this case. More details are given in Appendix B.

\appendix\label{}
\section{Modifying the proof in \cite{DF_loose}}
Suppose that $p=\om (\log n)/n^{k-1}$, where $\om=o(\log n)$ and $\om\to \infty$. Let $M=\binom{n}{k}p$ and consider a random $(k+1)$-uniform hypergraph $K$ with approximately $M'\approx M$ edges. Then 
\begin{align*}
\Pr(\exists\, e_1,e_2\in E(K):|e_1\cap e_2|=k)&\leq  \binom{n}{k+1} \binom{k+1}{k} n
\frac{\binom{\binom{n}{k+1}-2}{M'-2}}{\binom{\binom{n}{k+1}}{M'}}\\
&\leq n^{k+2}\bfrac{M'}{\binom{n}{k+1}}^2\\
&\leq n^{k+1}\bfrac{2(k+1)!\om n\log n}{n^{k+1}}^2\\
&=o(1).
\end{align*}
In this way we can justify viewing $H_{n,p,m}^{(k)}$  as a random $(k+1)$-uniform hypergraph. This would be a problem if the latter model gave an edge more than one color.

Let $m=2m_1$. Then $m_1$ will replace $m$ in the proof in \cite{DF_loose}. The proof in \cite{DF_loose} involves proving that \whp~$H_{n,p}^{(k)}$ contains a loose Hamilton cycle that respects a certain vertex partition. Such a Hamilton cycle will consist of $2m_1$ edges of the form $\{x_{i},x_{i+1},y_{i,1},\ldots,y_{i,\k}\}$, where $\k=k-2$, $1\le i\le 2m_1$, $x_{2m_1+1}=x_1$, $\set{x_1,\ldots,x_{2m_1}}=X$ and $\set{y_{1,1},\ldots,y_{2m_1,\k}}=Y$. This is done as follows: we choose a large positive integer $d$. Let $\cX$ be a set of size $2dm_1$ representing
$d$ copies of each $x\in X$. Denote the $j$th copy of $x\in X$ by $x\scr{}{j}\in \cX$ and let $\cX_x=\set{x\scr{}{j},\,j=1,2,\ldots,d}$. Then let $X_1,X_2,\ldots,X_{d}$ be a uniform random partition of $\cX$ into $d$ sets of size $2m_1$. Define $\psi_1:\cX\to X$ by $\psi_1(x\scr{}{j})=x$ for all $j$ and $x\in X$. Similarly, we let $\cY$ be a set of size $d\k m_1$ representing
$d/2$ copies of each $y\in \bX$. Denote the $j$th copy of $y\in \bX$ by $y\scr{}{j} \in \cY$ and let $\cY_y=\set{y\scr{}{j},\,j=1,2,\ldots,d/2}$.
Then let $Y_1,Y_2,\ldots,Y_{d}$ be a uniform random partition of $\cY$ into $d$ sets of size $\k m_1$. Define $\psi_2:\cY\to \bX$ by $\psi_2(y\scr{}{j})=y$ for all $y\in\bX$. Finally, let $\psi:\binom{\cX}{2}\times \binom{\cY}{\k} \to X^2 \times \bX^{\k}$ be such that $\psi(\n_1,\n_2,\x_1,\x_2,\dots,\x_\k) =
(\psi_1(\n_1),\psi_1(\n_2),\psi_2(\x_1),\psi_2(\x_2),\dots,\psi_2(\x_\k))$. 

All we need do is add a set $\cZ$ of size $dm_1$ representing $d/2$ copies of each $z\in Z$. We denote the $j$th copy of $z\in Z$ by $z\scr{}{j} \in \cZ$ and let $\cZ_z=\set{z\scr{}{j},\,j=1,2,\ldots,d/2}$. Then let $Z_1,Z_2,\ldots,Z_{d}$ be a uniform random partition of $\cZ$ into $d$ sets of size $m_1$. Define $\psi_3:\cZ\to Z$ by $\psi_3(z\scr{}{j})=z$ for all $z\in Z$. We then modify $\psi$ so that $\psi:\binom{\cX}{2}\times \binom{\cY}{\k} \times \cZ\to X^2 \times \bX^{\k}\times Z$ be such that $\psi(\n_1,\n_2,\x_1,\x_2,\dots,\x_\k,\z) =
(\psi_1(\n_1),\psi_1(\n_2),\psi_2(\x_1),\psi_2(\x_2),\dots,\psi_2(\x_\k),\psi_3(\z))$. After this, the proof in \cite{DF_loose} can be carried out with straightforward modifications involving adding a component for members of $Z$.

\section{Removing the requirement that $m$ is even}
We begin by defining a random colored directed hypergraph $D=D_{V,C,q}^{(k)}$. Here $V$ is the vertex set, $C$ is the set of colors for each edge $e\in \binom{V}{k}$ and for each of the $k!$ orderings $\ve$ of the vertices in $e$, we include $\ve$ as an oriented edge with probability $q$ and give it a color $c$ chosen uniformly from $C$. When $V=[n],C=[r]$, we refer to this graph as $\vH_{n,q,r}^{(k)}$. Generalizing Lemma 6 of \cite{Fer} we have the following lemma.
\begin{lemma}\label{lemcd}
If $q$ satisfies $q-2q^2=p$ then
\begin{multline}\label{comp}
\Pr(\vH_{n,q,r}^{(k)}\text{ has a loose rainbow Hamilton cycle})\geq \\
\Pr(H_{n,p,r}^{(k)}\text{ has a loose rainbow Hamilton cycle})
\end{multline}
\end{lemma}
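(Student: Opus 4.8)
The plan is to prove the stronger, purely probabilistic statement that $\vH_{n,q,r}^{(k)}$ \emph{contains} a copy of $H_{n,p,r}^{(k)}$ in the following sense: one can couple the two models on a common probability space so that every edge of $H_{n,p,r}^{(k)}$ is realised by a present oriented edge of $\vH_{n,q,r}^{(k)}$ carrying the same colour. Since the property ``has a loose rainbow Hamilton cycle'' is monotone increasing in the (coloured) edge set---adjoining further coloured oriented edges can never destroy an already present rainbow cycle---such a coupling immediately yields the event inclusion $\{H_{n,p,r}^{(k)}\text{ has a loose rainbow Hamilton cycle}\}\subseteq\{\vH_{n,q,r}^{(k)}\text{ has one}\}$, and hence \eqref{comp}. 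This is the natural generalisation of Lemma~6 of \cite{Fer}; the quadratic term $2q^2$ (rather than $q^2$) reflects that an unordered $k$-set now has two orientations available for realising it inside the directed model.

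To build the coupling I work one unordered $k$-set at a time, which is legitimate because distinct $k$-sets involve disjoint families of ordered tuples and are therefore independent in both models. Fix, for each $k$-set $e$, two distinct orderings $\vec{e}_1,\vec{e}_2$ (for definiteness a canonical ordering and its reverse, so that either traversal direction of $e$ can be accommodated); the remaining $k!-2$ orderings of $e$ are generated as independent $\mathrm{Bernoulli}(q)$ oriented edges with independent uniform colours and play no further role. I then couple the undirected state of $e$ with the pair $(\vec{e}_1,\vec{e}_2)$ so that $\vec{e}_1,\vec{e}_2$ are independent, each absent with probability $1-q$ and present with colour $c$ with probability $q/r$; $e$ is absent with probability $1-p$ and present with colour $c$ with probability $p/r$; and, crucially, whenever $e$ is present with colour $c$ at least one of $\vec{e}_1,\vec{e}_2$ is present with colour $c$. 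The existence of such a coupling is exactly a Strassen/Hall feasibility condition: writing $R_c$ for the event that $\vec{e}_1$ or $\vec{e}_2$ is present with colour $c$, one needs $\Pr\bigl(\bigcup_{c\in S}R_c\bigr)\ge (p/r)\,|S|$ for every colour set $S\subseteq[r]$. A direct computation gives $\Pr\bigl(\bigcup_{c\in S}R_c\bigr)=1-(1-q|S|/r)^2$, so the condition reduces to $p\le 2q-q^2|S|/r$, whose worst case $|S|=r$ is $p\le 2q-q^2$. The hypothesis $p=q-2q^2$ certainly guarantees this (indeed with room to spare), so the per-set coupling exists, and assembling the sets independently produces the required joint law, with exact marginals $H_{n,p,r}^{(k)}$ and $\vH_{n,q,r}^{(k)}$.

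It remains to lift a loose rainbow Hamilton cycle. Given such a cycle $e_1,\dots,e_{m}$ in the coupled copy of $H_{n,p,r}^{(k)}$, with pairwise distinct colours $c_1,\dots,c_{m}$, the realisability property supplies, for each $i$, one of the two designated orientations of $e_i$ that is present in $\vH_{n,q,r}^{(k)}$ and carries colour $c_i$. Replacing each $e_i$ by that oriented edge produces a spanning collection of present oriented edges whose underlying sets form the same loose Hamilton cycle and whose colours are still $c_1,\dots,c_{m}$, hence pairwise distinct; this is a loose rainbow Hamilton cycle of $\vH_{n,q,r}^{(k)}$. This establishes the inclusion of events and therefore \eqref{comp}. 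The main obstacle is the construction of the per-set coupling matching both directed marginals and the coloured realisability constraint simultaneously; packaging it as the Strassen condition above is what makes the role of the hypothesis transparent, the inequality $p\le 2q-q^2$ being precisely the slack that the relation $q-2q^2=p$ provides. A secondary point to check is that a single present orientation per edge satisfies the definition of a loose rainbow Hamilton cycle in the directed model---which it does, since each edge of the cycle is used once and the orientation is merely a bookkeeping device inherited from $\vH_{n,q,r}^{(k)}$.
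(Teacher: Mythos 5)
Your proof is correct, and it takes a genuinely different route from the paper's. The paper generalizes Ferber's argument via McDiarmid's interpolation method: it defines a chain of hybrid models $\G_0,\dots,\G_N$ in which the $k$-sets are switched one at a time from the undirected rule (edge present with probability $p$, one uniform color) to the directed rule (all $k!$ orientations independently present at rate $q$ with independent colors), and shows each switch cannot decrease the probability of a loose rainbow Hamilton cycle by conditioning on all other edges and comparing $p\left|\bigcup_\s C_\s\right|/|C|$ with a Bonferroni lower bound $q\th-q^2\th^2$, where $\th=\sum_\s|C_\s|/|C|$; this is exactly where the relation $p=q-2q^2$ is invoked, to absorb the quadratic correction. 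You instead construct one explicit global coupling under which every colored edge of $H_{n,p,r}^{(k)}$ is witnessed by a present oriented edge of $\vH_{n,q,r}^{(k)}$ of the same color, reducing the construction to a per-$k$-set Strassen/Hall feasibility check $1-(1-q|S|/r)^2\ge p|S|/r$, and then finish by monotonicity of the rainbow Hamiltonicity event. Your route buys several things: it yields a strictly stronger conclusion (the inequality holds simultaneously for \emph{every} increasing property of the colored edge set, not only rainbow Hamiltonicity); it shows the hypothesis is far from tight, since your computation only needs $p\le 2q-q^2$ (indeed, designating a single orientation per $k$-set shows $p\le q$ already suffices, so the specific relation $q-2q^2=p$ plays no essential role in your argument --- which also means your heuristic remark that the coefficient $2$ in $2q^2$ ``reflects the two orientations'' is not really accurate, though it is only motivational); and it sidesteps the delicate point in the interpolation argument that the Bonferroni bound is only useful when $\th$ is small (the paper invokes it ``for $0\le\th\le 1$,'' even though $\th$ is a priori a sum over all $k!$ orientations). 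What the paper's approach buys in exchange is that it never needs to construct a joint distribution: it compares conditional probabilities edge by edge, a technique that also works in situations where no monotone coupling exists; but for the increasing event at hand, your coupling is the cleaner and more robust tool.
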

\begin{proof}
Using an idea of McDiarmid \cite{McD} we define a sequence of random colored directed hypergraphs $\G_i,i=0,1,\ldots,N=\binom{n}{k}$. Let $e_1,e_2,\ldots,e_N$ be an enumeration of $\binom{[n]}{k}$ and let $\eta_i$ denote the $k!$ distinct orderings of the elements of $e_i$. In $\G_i$, we add all of $\eta_j,j\geq i$ to our graph with probability $p$ and none with probability $1-p$. For $j<i$ we add each member of $\eta_i$ independently to $\G_i$ with probability $q$. Thus $\G_0=H_{n,p,r}^{(k)}$ and $\G_N=\vH_{n,q,r}^{(k)}$ and to prove \eqref{comp} we show that for each $i\geq 0$,
\begin{multline}\label{comp1}
\Pr(\G_{i}\text{ has a loose rainbow Hamilton cycle})\geq \\
\Pr(\G_{i-1}\text{ has a loose rainbow Hamilton cycle}).
\end{multline}
To prove \eqref{comp1} we condition on all edges associated with $\eta_j,j\neq i$. Denote this conditioning by $\cE$. Then $\G_{i-1}$ and $\G_{i}$ differ only in the existence of the edges in $\eta_i$. We focus on the case where the existence of a loose rainbow cycle depends on the edges of $\eta_i$. (In the remaining cases, there is a cycle without these edges or there is no such cycle regardless of these edges.) For each $\s\in \eta_i$ let $C_\s$ be the set of colors such that if we include $\s$ of a color in $C_\s$ then we create a new loose rainbow cycle. Then,
\beq{comp2}
\Pr(\G_{i-1}\text{ has a loose rainbow Hamilton cycle}\mid \cE)=\frac{p\left|\bigcup_\s C_\s\right|}{|C|}\leq \frac{p\sum_\s|C_\s|}{|C|}.
\eeq
On the other hand,
\begin{multline}\label{comp3}
\Pr(\G_{i}\text{ has a loose rainbow Hamilton cycle}\mid \cE)\geq \frac{q\sum_\s|C_\s|}{|C|}-\frac{q^2\sum_{\s,\t}|C_\s|\, |C_\t|}{|C|^2}=\\ \frac{q\sum_\s|C_\s|}{|C|}-\bfrac{q\sum_\s|C_\s|}{|C|}^2.
\end{multline}
Comparing \eqref{comp2}, \eqref{comp3} we see that \eqref{comp1} holds. This follows from the fact that $q\th-q^2\th^2\geq p\th$ for $0\leq \th\leq 1$, by our choice of $q$. 
\end{proof}

Now suppose that $n=(k-1)m$ and $m$ is not even. The next idea is to generate $H_{n,p,r}^{(k)}$ as the union of independent random hypergraphs $\bigcup_{i=0}^{\om_1} H_i$, $\om_1=\om^{1/2}$. Let the hypergraph $H_0=H_{n,p/2,r}^{(k)}$. For $i\geq 1$, we let the $H_i$ be independent copies of $\vH_{n,q,r}$, where we ignore orientation. Here $q$ satisfies $1-p=(1-p/2)(1-q)^{k!\om}$ and so the decomposition is valid. Next note that the probability an edge occurs twice as an edge of an $H_i$ is $O(\om n^kp^2)=o(1)$. So \whp\ there is no problem with an edge having two colors.

Now fix an edge $e^*$ of $H_0$ and let $c$ be its color in $H_0$. Also fix an ordering $x_1,x_2,\ldots,x_k$ of the vertices of $e^*$. Supplying $e^*$ is the only role of $H_0$. As we expose $H_i$, we construct a random colored directed hypergraph $D_i$.  The vertices of $D_i$ are $V^*\setminus \set{x_1,x_2,\ldots,x_k})\cup v^*)$ and $C^*=[r]\setminus \set{c}$, where $v^*$ is a new vertex. Note that $|V^*|=(k-1)(m-1)$. An arc $e$ gives rise to an edge of $H_i$ if it satisfies one of the following:
\begin{enumerate}[(a)]
\item $e\cap e^*=\emptyset$.
\item $e\cap e^*=\set{x_1}$ and $x_1$ is not the first vertex of $e$. In which case we add the edge $(e\setminus \set{x_1})\cup v^*$ to $D_i$.
\item $e\cap e^*=\set{x_k}$ and $x_k$ is the first vertex of $e$. In which case we add the edge $(e\setminus \set{x_k})\cup v^*$ to $D_i$.
\end{enumerate}
We then observe that by this construction, each $D_i$ is distributed as $D_{V^*,C^*,q}$. It follows from Lemma \ref{lemcd} that if $\r=q-2q^2$, then
\begin{multline}\label{f1}
\Pr(D_i\text{ contains a loose rainbow Hamilton cycle})\geq\\
\Pr(H_{n-(k-1),\r,r-1}\text{ contains a loose rainbow Hamilton cycle}).
\end{multline}
We have $\r=\Omega(p/\om_1)=\Omega(\om^{1/2}n^{-(k-1)}\log n$, so $D_i$ contains  a loose rainbow Hamilton cycle \whp\ from the case where $m$ is even. Now, by symmetry $v^*$ is a start/end point of the edge of such a cycle with probability $2/k$. If $D_i$ contains such a cycle and $v^*$ is a start/end, then when it is replaced by $e^*$ in the implied permutation of $V^*$, we obtain a loose rainbow Hamilton cycle in $H_i$. Thus, the probability that $H_{n,p,r}^{(k)}$ contains no loose rainbow Hamilton cycle, given $e^*$ exists, can be bounded by $(1-2/k-o(1))^{\om_1}=o(1)$.

\begin{thebibliography}{99}

\bibitem{AKS} M.~Ajtai, J.~Koml\'{o}s and E.~Szemer\'{e}di,
The first occurrence of Hamilton cycles in random graphs,
{\em Annals of Discrete Mathematics}~\textbf{27} (1985), 173--178.

\bibitem{ABKP}  P.~Allen, J.~B\"ottcher, Y.~Kohayakawa, Y.~Person,
Tight {H}amilton cycles in random hypergraphs, 
{\em Random Structures Algorithms} \textbf{46} (2015), no. 3, 446--465. 

\bibitem{BF} D.~Bal and A.M. Frieze, 
Rainbow matchings and {H}amilton cycles in random graphs,
{\em Random Structures Algorithms} \textbf{48} (2016), no. 3, 503--523.

\bibitem{Boll} B.~Bollob\'{a}s,
{\em The evolution of sparse graphs, in Graph Theory and Combinatorics}, Academic Press, Proceedings of Cambridge Combinatorics,
Conference in Honour of Paul Erd\H{o}s (B.~Bollob\'{a}s; Ed) (1984), 35--57.

\bibitem{CF} C. Cooper and A. Frieze, 
Multi-coloured {H}amilton cycles in random edge-coloured graphs, 
{\em Combinatorics, Probability and Computing} \textbf{11} (2002), 129--133.

\bibitem{DF} A. Dudek and A.M. Frieze, 
Tight Hamilton cycles in random uniform hypergraphs, 
{\em Random Structures Algorithms}~\textbf{42} (2013), 374--385.

\bibitem{DF_loose} A.~Dudek and A.M.~Frieze, 
Loose Hamilton cycles in random uniform hypergraphs,
{\em Electron. J. Combin.} \textbf{18} (2011), no. 1, Paper 48, 14 pp.

\bibitem{DFLS} A.~Dudek, A.M.~Frieze, Po-Shen Loh, and S.~Speiss, 
Optimal divisibility conditions for loose Hamilton cycles in random hypergraphs,
{\em Electron. J. Combin.} \textbf{19} (2012), no. 4, Paper 44, 17 pp. 

\bibitem{Fer} A.~Ferber, 
Closing gaps in problems related to Hamilton cycles in random graphs and hypergraphs,
{\em Electron. J. Combin.} \textbf{22} (2015), no. 1, Paper 1.61, 7 pp. 

\bibitem{FerK} A. Ferber and M. Krivelevich, 
Rainbow Hamilton cycles in random graphs and hypergraphs,
{\em Recent trends in combinatorics, IMA Volumes in Mathematics and its applications}, A. Beveridge, J. R. Griggs, L. Hogben, G. Musiker and P. Tetali, Eds., Springer 2016, 167--189.

\bibitem{FNP} A. Ferber, R.  Nenadov, and U. Peter, 
Universality of random graphs and rainbow embedding,
{\em Random Structures Algorithms} \textbf{48} (2016), no. 3, 546--564. 

\bibitem{F} A.M. Frieze, 
Loose Hamilton cycles in random 3-uniform hypergraphs,
{\em Electron. J. Combin.} \textbf{17} (2010), no. 1, Note 28, 4 pp. 

\bibitem{FK} A.M. Frieze and M.~Krivelevich,
Packing {H}amilton cycles in random and pseudo-random hypergraphs, 
{\em Random Structures Algorithms} \textbf{41} (2012), no. 1, 1--22. 

\bibitem{FL}  A.M. Frieze and Po-Shen Loh, 
Rainbow {H}amilton cycles in random graphs, 
{\em Random Structures Algorithms} \textbf{44} (2014), no. 3, 328--354.

\bibitem{G} R.J. Gould, 
Recent advances on the Hamiltonian problem: Survey III,
{\em Graphs Combin.} \textbf{30} (2014), no. 1, 1--46.

\bibitem{KS} J.~Koml\'os and E.~Szemer\'edi,
Limit distributions for the existence of Hamilton circuits in a random graph,
{\em Discrete Math.}~\textbf{43} (1983), 55--63.

\bibitem{McD} C. McDiarmid, Clutter percolation and random graphs, Combinatorial optimization, II (Proc. Conf., Univ. East Anglia, Norwich, 1979). 
{\em Math. Programming Stud.} \textbf{13} (1980), 17--25. 

\bibitem{NS} R.~Nenadov and N.~\v{S}kori\'c,
Powers of {H}amilton cycles in random graphs and tight Hamilton cycles in random hypergraphs,
to appear in Random Structures Algorithms.

\bibitem{PP} O.~Parczyk and Y.~Person, 
Spanning structures and universality in sparse hypergraphs,
{\em Random Structures Algorithms} \textbf{49} (2016), no. 4, 819--844.


\end{thebibliography}
\end{document}